\theoremstyle{plain}
\newtheorem{theorem}{Theorem}[section]
\newtheorem{proposition}[theorem]{Proposition}
\newtheorem{lemma}[theorem]{Lemma}
\newtheorem{remark}[theorem]{Remark}
\theoremstyle{definition}
\newtheorem{definition}[theorem]{Definition} 
\numberwithin{equation}{section}
  \def\vhrulefill#1{\leavevmode\leaders\hrule\@height#1\hfill \kern\z@}
\newcommand{\ts}{\hspace{0.5pt}}
\newcommand{\nts}{\hspace{-0.5pt}}
\newcommand{\exend}{\hfill $\Diamond$}
\newcommand{\myfrac}[2]{\frac{\raisebox{-2pt}{$#1$}}
      {\raisebox{0.5pt}{$#2$}}}
\begin{document}

\title[Autocorrelations of the Thue--Morse sequence]{On the absolute value of the autocorrelations\\[1mm] of the Thue--Morse sequence}

\author{Michael Coons}
\author{Jan Maz\'a\v{c}}
\author{Ari Pincus--Kazmar}
\author{Adam Stout}

\address{Department of Mathematics and Statistics, California State
  University\newline \hspace*{\parindent}400 West First
  Street, Chico, California 95929, USA} \email{mjcoons@csuchico.edu, astout@csuchico.edu}
\address{Fakult\"at f\"ur Mathematik, Universit\"at Bielefeld, \newline
\hspace*{\parindent}Postfach 100131, 33501 Bielefeld, Germany}
\email{jmazac@math.uni-bielefeld.de}
\address{Department of Mathematics, Statistics and Computer Science, Macalester
  College\newline \hspace*{\parindent}1600 Grand Avenue, 
  St.~Paul, Minnesota 55105, USA} \email{aripincuskazmar@gmail.com}

\makeatletter
\@namedef{subjclassname@2020}{%
  \textup{2020} Mathematics Subject Classification}
\makeatother

\keywords{Thue--Morse sequence, correlations, regular sequences, R\'enyi dimensions}

\subjclass[2020]{11B85, 37B10, 52C23}

 \date{\today}

\begin{abstract} Recently, Baake and Coons proved several results on the average size of the autocorrelations of the Thue--Morse sequence. They also considered the absolute value of the autocorrelations, and showed that the average value of the autocorrelations is zero. In particular, they showed that $\sum_{n\leqslant x}|\eta(n)|=o(x^\alpha)$ for any $\alpha>\log(3)/\log(4)$. In this paper, we sharpen this result, providing upper and lower bounds for $\alpha$. On the way to our lower bounds, we obtain the structure of the linear representation of the point-wise product of two $k$-regular sequences, which may be of independent interest.
\end{abstract}

\maketitle

\section{Introduction}

Let $t$ be the (one-sided) Thue--Morse sequence \cite{M21,Thue},
or word, taking the values $\pm 1$, defined by $t(0)=1$ and, for
$k \geqslant 0$, by $t(2k)=t(k)$ and $t(2k+1)=-t(k)$. The standard autocorrelation coefficients
$\eta(m)$, for each $m\geqslant 0$, of the Thue--Morse sequence, are given by
\[
  \eta(m) \, = \lim_{N\to\infty}\myfrac{1}{N}
  \sum_{k=0}^{N\nts -1} t(k) \ts t(k+m) \ts .
\] Since the discrete hull of the (two-sided) Thue--Morse sequence is uniquely ergodic under the $\mathbb{Z}$-action of the shift, an application of Birkhoff's ergodic theorem guarantees the uniform existence of its autocorrelation measure, which then establishes the existence of the limits $\eta(m)$, see Baake and Grimm \cite[Ch.~4]{TAO} for more details and background. The recurrences satisfied by the Thue--Morse sequence induce similar recurrences for the sequence $\eta$. In particular, one easily computes that $\eta(0)=1$, and, for
$m\geqslant 0$, using the Thue--Morse recursions, that
\begin{equation}\label{eq:v2recs}
\begin{split}
  \eta(2m) \, & = \, \eta(m) \qquad\mbox{and} \\
  \eta(2m+1) \, & = -\myfrac{1}{2}\big(\eta(m)+\eta(m+1)\big) .
\end{split}  
\end{equation}

These recursions, which go back to Mahler \cite{Mah27}, have many consequences as we shall see later. For example, one can immediately obtain certain bounds on the modulus of $\eta$. One has 
\[\max_{m\neq 0}\, \bigl|\eta(m) \bigr| \, = \, \myfrac{1}{3} \qquad \mbox{and} \qquad \bigl| \eta(2m+1) \bigr| < \myfrac{1}{3} \quad \mbox{for all} \ m\geqslant 2.  \]

One can also derive explicit values of $\eta$ at certain positions showing that the sequence contains infinitely many positive and negative elements, as well as zeros. It easily follows that, for all $n \geqslant 2$, we have 
\[\eta(2^n) \, = \, -\myfrac{1}{3}, \qquad \eta(2^n+2^{n-1}) \, = \, \myfrac{1}{3} \qquad \mbox{and} \qquad \eta(2^n+2^{n-2}) \, = \, 0.  \]

This immediately raises a question about the average behaviour of $\eta$. Recently, Baake and Coons~\cite{BC2024} showed that the $k$-th moments of $\eta$ and $|\eta|$ --- that is, the mean values of $\eta^k$ and $|\eta|^k$ --- are equal to zero for any integer $k\geqslant 1$. The case $k=1$ is of particular interest; the sign changes in $\eta$ contribute to such extreme cancellation that the partial sums of $\eta$ are uniformly bounded. In an attempt to better understand the contributions from the sign changes and the magnitudes of $\eta$, Baake and Coons also showed that for any $\alpha>\log_4(3)\approx 0.79241505$, \begin{equation}\label{eq:limit}\lim_{x\to\infty}\frac{1}{x^\alpha}\sum_{m\leqslant x}|\eta(m)|=0,\end{equation} noting that their bound for $\alpha$ is not optimal and a computation proves that the optimal value of $\alpha$ is bounded above by $0.652633$. Here, and throughout, for any integer $k\geqslant 2$, we denote the base-$k$ logarithm by $\log_k$.

Considering lower bounds for the optimal value of $\alpha$, by comparing with the growth of $\eta^2$, a~result of Zaks, Pikovsky, and Kurths \cite{ZPK1997} implies that $\alpha$ must be larger than $\log_2((1+\sqrt{17})/4)\approx0.357018636$.

In this paper, we provide improved bounds for $\alpha$. 

\begin{theorem}\label{thm:main} There exists positive constants $c_1$ and $c_2$, such that, for any $x$, \[c_1x^{0.6274882485}\leqslant \sum_{m\leqslant x}|\eta(m)|\leqslant c_2x^{0.6464616661}.\]
\end{theorem}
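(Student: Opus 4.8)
The plan is to exploit the recursions \eqref{eq:v2recs} to set up a self-similar (substitution-like) structure on $S(x)\defeq\sum_{m\leqslant x}|\eta(m)|$ and to extract matched exponents from the resulting transfer operator. First I would split the sum over a dyadic block $m\in[2^n,2^{n+1})$ according to parity: the even terms contribute $\sum_{m}|\eta(m)|$ over the previous block by $\eta(2m)=\eta(m)$, while the odd terms contribute $\tfrac12\sum_m|\eta(m)+\eta(m+1)|$. This means the right object to track is not $S$ alone but the vector of block sums of the functions $|\eta(m)|$, $|\eta(m+1)|$, and $|\eta(m)+\eta(m+1)|$ (and, because $|a+b|$ is not linear in $(|a|,|b|)$, one also needs the sign-refined quantities such as $\sum \eta(m)^{+}$, $\sum\eta(m)^{-}$ and the analogous sums over $m+1$, together with the ``carry'' terms that record when $\eta(m)$ and $\eta(m+1)$ have opposite signs). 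Using \eqref{eq:v2recs} again on $\eta(m)+\eta(m+1)=\eta(m)+\eta(m+1)$ at even and odd $m$, each of these block functionals is expressed as a fixed nonnegative integer/rational linear combination of block functionals at the halved scale, i.e.\ $v_{n+1}=Av_n$ for an explicit nonnegative matrix $A$. This is precisely where the structural result announced in the abstract --- the linear representation of the point-wise product of two $k$-regular sequences --- enters: it guarantees that $|\eta|$ (equivalently $\eta^{+},\eta^{-}$, handled piecewise on the sign pattern) lies in a finite-dimensional space closed under the two decimation maps $m\mapsto 2m$, $m\mapsto 2m+1$, so the transfer matrix $A$ exists and is finite.

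Given $A$, the upper bound $S(x)\leqslant c_2 x^{\log_2\rho_{+}}$ follows by iterating $v_{n+1}=Av_n$ and bounding $\|v_n\|\leqslant C\rho_{+}^{\,n}$ where $\rho_{+}$ is the spectral radius of $A$ (Perron--Frobenius, since $A\geqslant 0$); summing over dyadic scales up to $2^n\leqslant x<2^{n+1}$ and using $\rho_{+}>1$ gives the geometric-series bound with exponent $\log_2\rho_{+}$. For the lower bound I would run the same iteration but now bound $\|v_n\|$ from below: because $A$ is nonnegative and (after restricting to the relevant irreducible block containing the seed vector $v_0$ built from the initial values $\eta(0)=1,\eta(1)=-\tfrac12,\dots$) eventually has a strictly positive Perron eigenvector, one gets $\|v_n\|\geqslant c\,\rho_{-}^{\,n}$ along the subsequence of scales, hence $S(2^n)\geqslant c_1(2^n)^{\log_2\rho_{-}}$; a standard monotonicity/interpolation argument upgrades this to all $x$. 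The stated numerical exponents $0.6274882485$ and $0.6464616661$ are then $\log_2\rho_{-}$ and $\log_2\rho_{+}$ for the two relevant (sub)matrices, computed from their characteristic polynomials.

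The main obstacle is getting the transfer matrix $A$ genuinely correct and as small as possible: the nonlinearity of $|\,\cdot\,|$ forces one to carry sign information, and a naive bookkeeping blows the dimension up or introduces slack that separates the upper and lower exponents more than necessary. Concretely, one must (i) determine the finite set of ``states'' --- sign patterns of the pair $(\eta(m),\eta(m+1))$ together with which linear form ($|\eta(m)|$, $|\eta(m+1)|$, $|\eta(m)+\eta(m+1)|$, etc.) is being summed --- that is genuinely closed under the two halving substitutions; (ii) verify closure using only \eqref{eq:v2recs} and the elementary bounds $|\eta(m)|\leqslant\tfrac13$ recalled above, which control when $\eta(m)+\eta(m+1)$ can change sign; and (iii) check irreducibility/aperiodicity of the resulting block so that Perron--Frobenius yields clean two-sided estimates rather than just $\limsup$/$\liminf$. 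I expect steps (i)--(ii) to be the delicate combinatorial heart; once $A$ is pinned down, the analytic part (summation over dyadic scales, Perron--Frobenius asymptotics, passage from $x=2^n$ to general $x$) is routine, and the gap between $\log_2\rho_{-}$ and $\log_2\rho_{+}$ simply reflects the sub-blocks one can respectively bound below and above.
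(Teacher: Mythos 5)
Your plan founders on a point the paper flags explicitly at the start of Section \ref{sec:lower}: the absolute value of a $k$-regular sequence need not be $k$-regular, and no finite ``sign-state'' refinement is known to repair this for $\eta$. The recursion $\eta(2m+1)=-\tfrac12\bigl(\eta(m)+\eta(m+1)\bigr)$ means that the sign of $\eta(2m+1)$ is \emph{not} determined by the sign pattern of $(\eta(m),\eta(m+1))$: when the two have opposite signs, you need to know which has larger modulus, and since $\eta$ takes infinitely many values these magnitude comparisons do not close up into a finite set of states. Your appeal to Proposition \ref{prop:kronecker} is a misapplication: that result gives a linear representation for the product of two sequences \emph{each of which is already known to be regular}, whereas writing $|\eta|=\mathrm{sgn}(\eta)\cdot\eta$ would require $\mathrm{sgn}(\eta)$ to be automatic, which is precisely what is not known. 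So the exact transfer relation $v_{n+1}=Av_n$ you need for the lower bound does not exist (or at least is not established), and steps (i)--(ii), which you correctly identify as the heart of the matter, cannot be carried out as described. A further symptom that something is off: a genuine transfer matrix would produce a single exponent $\log_2\rho$, not the two distinct exponents in the statement; the gap between $0.6274\ldots$ and $0.6464\ldots$ exists precisely because the paper uses two unrelated methods, neither believed optimal.

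What the paper actually does is sidestep $|\eta|$ entirely. For the lower bound it picks explicit $2^n$-regular, $\pm1$-valued sequences $f^{(n)}$ (built by taking dominant-entry signs of the matrices in $L_{2^n}(\eta)$, so that $f^{(n)}$ mimics $\mathrm{sgn}(\eta)$), forms the genuinely regular product $f^{(n)}\eta$ via Proposition \ref{prop:kronecker}, and applies Dumas' theorem (Proposition \ref{prop:Dumas}) to get $\bigl|\sum_{m\leqslant x}f^{(n)}(m)\eta(m)\bigr|\asymp x^{\log_{2^n}\rho}$, which bounds $\sum_{m\leqslant x}|\eta(m)|$ from below; $n=25$ gives the stated exponent. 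For the upper bound it uses only a one-sided triangle-inequality estimate (Lemma \ref{lem:upper}): the $\ell^1$-norm $\|{\bf R}_n\|_{\rm abs}$ of the exact $2^n$-level recursion matrix controls the growth, with $n=30$ giving $0.6464\ldots$. Your dyadic-block/triangle-inequality idea is close in spirit to this upper-bound half (as a sub-multiplicative inequality rather than an exact linear recursion), but your lower-bound mechanism has no analogue that works.
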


This paper is organized as follows. In Section \ref{sec:lower}, we provide a result on the linear representation of the point-wise product of regular sequences, and we examine the consequences of twisting $\eta$ by various automatic sequences of absolute value one, which results in proving the lower bound in Theorem \ref{thm:main}. In Section \ref{sec:upper}, we compute upper bounds for $\alpha$ and use this to prove the upper bound in Theorem~\ref{thm:main}. Finally, in Section \ref{sec:further}, we give some ideas for further research and discuss the optimal value of $\alpha$.

\section{Regular sequences and their growth}\label{sec:kreg}

Let $f:\mathbb{Z}_{\geqslant 0}\to\mathbb{R}$ be a sequence. We define the $k$-kernel of $f$ by \[{\rm ker}_k(f):=\{(f(k^\ell n+r))_{n\geqslant 0}:\ell\geqslant 0, 0\leqslant r<k^\ell\}.\] If ${\rm ker}_k(f)$ is finite, we say that the sequence $f$ is {\em $k$-automatic} \cite{C1972}. From above, we can see that the only two elements of the $2$-kernel of the Thue--Morse sequence are $t$ and $-t$, so we have that $|{\rm ker}_2(t)|=2$, and that $t$ is $2$-automatic. Now, in the case of $\eta$, it is quite clear, since $\eta$ takes infinitely many values, that ${\rm ker}_2(\eta)$ is infinite. But, the recurrences in \eqref{eq:v2recs} imply that ${\rm ker}_2(\eta)$ generates a~finitely generated $\mathbb{R}$-vector space of dimension $2$. In general, if the $\mathbb{R}$-vector space ${\rm Span}_\mathbb{R}\big({\rm ker}_k(f)\big)$ is finite dimensional, we say that the sequence $f$ is {\em $k$-regular}. The set of $k$-regular sequences was introduced by Allouche and Shallit \cite{AS1992} as a natural analogue of $k$-automatic sequences. Indeed, this concept is robust---a $k$-regular sequence that takes on only finitely many values is $k$-automatic \cite[Theorem 2.3]{AS1992}. If $f$ is $k$-regular, one can use a basis for ${\rm Span}_\mathbb{R}\big({\rm ker}_k(f)\big)$ to show that there is an integer $d\geqslant 1$, a real $1\times d$ vector $\boldsymbol{\ell}$, and real $d\times d$ matrices ${\bf A}_0,{\bf A}_1,\ldots, {\bf A}_{k-1}$ such that \[f(n)=\boldsymbol{\ell}\,{\bf A}_{i_{s}}{\bf A}_{i_{s-1}}\cdots{\bf A}_{i_{1}}{\bf A}_{i_{0}}{\bf e}_1,\] where the base-$k$ expansion of $n$ is $(n)_k=i_s i_{s-1}\cdots i_1i_0$ and ${\bf e}_1$ is the standard column basis vector of $\mathbb{R}^d$ with a $1$ in the first entry and zeros elsewhere. We call the collection \[L_k(f):=(\boldsymbol{\ell}, {\bf A}_0, {\bf A}_1,\ldots,{\bf A}_{k-1})\] the {\em linear representation} of the $k$-regular sequence $f$. Set ${\bf A}:={\bf A}_0+{\bf A}_1+\cdots+{\bf A}_{k-1}$, and let $\rho:=\rho({\bf A})$ denote the spectral radius of ${\bf A}$ and $\rho^*:=\rho^*({\bf A}_0, {\bf A}_1,\ldots,{\bf A}_{k-1})$ denote the joint spectral radius of the matrices ${\bf A}_0, {\bf A}_1,\ldots,{\bf A}_{k-1}$. 

The structure of regular sequences, and in particular, the existence of their linear representation, allowed Dumas \cite[Theorem 3]{D2013} to give an explicit description of the growth of their partial sums. 

\begin{proposition}[Dumas, 2013]\label{prop:Dumas} Suppose that the spectral radius $\rho=\rho({\bf A})$ is the unique dominant eigenvalue of ${\bf A}$, that $\rho$ has equal geometric and algebraic multiplicities, and that $\rho({\bf A})>\rho^*({\bf A}_0,\ldots,{\bf A}_{k-1})$. Then, there exists a H\"older continuous 1-periodic function $\psi$ that is bounded away from zero such that \[\sum_{m\leqslant x}f(m) =\psi\big(\log_k(x)\big)\,x^{\log_k(\rho)}+o\big(x^{\log_k(\rho)}\big).\] 
\end{proposition}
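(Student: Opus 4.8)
The plan is to work with the vector-valued partial sums attached to the linear representation $L_k(f)=(\boldsymbol{\ell},{\bf A}_0,\dots,{\bf A}_{k-1})$ and to renormalize them --- in effect, to set up and solve a summatory dilation (refinement) equation. Write ${\bf q}(n):={\bf A}_{i_s}\cdots{\bf A}_{i_0}{\bf e}_1$, so $f(n)=\boldsymbol{\ell}\,{\bf q}(n)$, and put ${\bf T}(x):=\sum_{0\leqslant n<x}{\bf q}(n)$, so that $\sum_{m<x}f(m)=\boldsymbol{\ell}\,{\bf T}(x)$; the gap between $\sum_{m<x}$ and $\sum_{m\leqslant x}$ is a single term ${\bf q}(\lfloor x\rfloor)$ of size $O\bigl((\rho^{*}+\varepsilon)^{\log_k x}\bigr)=o(x^{\log_k\rho})$, absorbed at the end. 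After the standard normalization of the representation that makes inserting leading zeros harmless, the integers in a block $[j\ts k^{N},(j{+}1)k^{N})$ contribute exactly ${\bf A}_j\,{\bf A}^{N}{\bf e}_1$, and iterating this digit by digit gives, for $x$ with base-$k$ expansion $d_0 d_1\cdots d_N$,
\[
{\bf T}(x)\;=\;\sum_{i=0}^{N}{\bf A}_{d_0}\cdots{\bf A}_{d_{i-1}}\bigl({\bf A}_0+\cdots+{\bf A}_{d_i-1}\bigr){\bf A}^{\,N-i}{\bf e}_1 ,
\]
the finite dilation equation we renormalize; in particular ${\bf T}(k^{N})={\bf A}^{N}{\bf e}_1$.

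First I would analyze ${\bf T}$ along powers of $k$. Since $\rho=\rho({\bf A})$ is the unique eigenvalue of ${\bf A}$ of modulus $\rho$ and is semisimple, ${\bf A}^{N}=\rho^{N}\Pi+O(\sigma^{N})$, where $\Pi$ is the spectral projector onto $\ker({\bf A}-\rho I)$ and $\sigma$ is any number with $\max\{\rho^{*},|\lambda_2|\}<\sigma<\rho$, $\lambda_2$ an eigenvalue of second-largest modulus; hence $\sum_{m<k^{N}}f(m)=\rho^{N}\,\boldsymbol{\ell}\,\Pi\,{\bf e}_1+O(\sigma^{N})$. For general $x$ write $x=k^{N}t$ with $N=\lfloor\log_k x\rfloor$ and $t\in[1,k)$ and set $G_N(t):=\rho^{-N}{\bf T}(k^{N}t)$. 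Substituting ${\bf A}^{N-i}{\bf e}_1=\rho^{N-i}\Pi{\bf e}_1+O(\sigma^{N-i})$ in the dilation equation and using the joint-spectral-radius bound $\|{\bf A}_{d_0}\cdots{\bf A}_{d_{i-1}}\|=O\bigl((\rho^{*}+\varepsilon)^{i}\bigr)$, the hypotheses $\rho^{*}<\rho$ and $|\lambda_2|<\rho$ make both the replacement error and the tail of the resulting series uniformly $o(1)$ in $t$, so $G_N\to G$ uniformly on $[1,k)$, where
\[
G(t)\;=\;\sum_{i\geqslant 0}\rho^{-i}\,{\bf A}_{d_0(t)}\cdots{\bf A}_{d_{i-1}(t)}\bigl({\bf A}_0+\cdots+{\bf A}_{d_i(t)-1}\bigr)\Pi{\bf e}_1
\]
and $d_j(t)$ is the $j$-th base-$k$ digit of $t$. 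This $G$ is a de~Rham-type function: the condition $\rho^{*}<\rho$ is exactly what makes it well defined at $k$-adic rationals (where two digit expansions compete) and continuous; and if $|t-t'|\leqslant k^{-m}$ then only the tail $i\geqslant m$ of the series survives in $G(t)-G(t')$, so $|G(t)-G(t')|=O\bigl((k^{-m})^{\log_k(\rho/\rho^{*})-\varepsilon}\bigr)$, i.e.\ $G$, hence $\psi$ below, is Hölder continuous.

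To finish the asymptotic, note that the definition of $G$ yields the scaling relation $G(kt)=\rho\,G(t)$ (directly, since ${\bf T}(k^{N}\cdot kt)={\bf T}(k^{N+1}t)$). Hence $\psi(\xi):=\boldsymbol{\ell}\,G\bigl(k^{\{\xi\}}\bigr)\,\rho^{-\{\xi\}}$, with $\{\cdot\}$ the fractional part, is a well-defined $1$-periodic function, Hölder continuous on $\RR$ because the scaling relation removes the would-be jump at the integers ($\lim_{\xi\uparrow1}\psi(\xi)=\boldsymbol{\ell}\,G(k)\,\rho^{-1}=\boldsymbol{\ell}\,G(1)=\psi(0)$, using continuity of $G$ at $k$). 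Since $x^{\log_k\rho}=\rho^{N}\rho^{\{\log_k x\}}$, the uniform convergence $G_N\to G$ together with the $O(\sigma^{N})$ error gives $\sum_{m\leqslant x}f(m)=\psi\bigl(\log_k(x)\bigr)\,x^{\log_k(\rho)}+o\bigl(x^{\log_k(\rho)}\bigr)$.

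The remaining point --- and the one I expect to be the main obstacle --- is that $\psi$ is bounded away from zero. As $\psi$ is continuous and $1$-periodic it attains its infimum, so it suffices to show $\psi$ never vanishes, i.e.\ $\boldsymbol{\ell}\,G(t)\neq 0$ for $t\in[1,k]$. At $t=1$ this equals the leading coefficient $\boldsymbol{\ell}\,\Pi\,{\bf e}_1$ of the power-of-$k$ asymptotics, which one checks is nonzero (otherwise $\rho$ would not be the genuine exponential rate of the summatory function along powers of $k$). Propagating non-vanishing to all $t$ is the delicate step: one feeds the refinement identity $G(t)=\bigl({\bf A}_0+\cdots+{\bf A}_{\lfloor t\rfloor-1}\bigr)\Pi{\bf e}_1+\rho^{-1}{\bf A}_{\lfloor t\rfloor}\,G\bigl(k(t-\lfloor t\rfloor)\bigr)$ into itself to write $\boldsymbol{\ell}\,G(t)$ as a $\rho$-summable superposition of the dominant eigendata along the digit branches of $t$, rules out exact cancellation using that $\rho$ is simple and dominant (so $\Pi$ has full rank on its eigenspace and the ${\bf A}_j$ act non-degenerately on it), and upgrades the resulting pointwise non-vanishing to a uniform positive lower bound via the Hölder estimate. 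I expect this bookkeeping --- together with pinning down the exact normalization of $L_k(f)$ needed for the clean dilation equation --- to be the only genuinely technical parts; everything else is spectral calculus plus the joint-spectral-radius estimate.
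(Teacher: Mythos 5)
The paper offers no proof of this proposition: it is imported from Dumas \cite{D2013} as a black box, so there is no internal argument to compare yours against. That said, your reconstruction of the asymptotic-expansion part is, in substance, Dumas' actual method --- the digit-by-digit dilation equation for the vector-valued summatory function, the spectral splitting ${\bf A}^{N}=\rho^{N}\Pi+O(\sigma^{N})$, the joint-spectral-radius control of the prefix products, and the resulting de~Rham-type limit function with its H\"older continuity and scaling relation --- and that part of the sketch is sound. The one caveat there is the leading-zero normalization: passing to a zero-insensitive representation changes the matrices, so one must check that the spectral hypotheses survive the normalization (in this paper's applications they do, since $\boldsymbol{\ell}\ts{\bf A}_0=\boldsymbol{\ell}$ already holds), but this is routine bookkeeping.

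The genuine gap is exactly where you predicted it: the clause that $\psi$ is bounded away from zero. Your argument that $\boldsymbol{\ell}\,\Pi\,{\bf e}_1\neq 0$ ``otherwise $\rho$ would not be the genuine exponential rate of the summatory function'' is circular --- nothing in the hypotheses forces $\rho$ to be that rate. The hypotheses constrain only the matrices ${\bf A}_0,\ldots,{\bf A}_{k-1}$ and say nothing about $\boldsymbol{\ell}$; if $\boldsymbol{\ell}\,\Pi\,{\bf e}_1=0$, the leading term along $x=k^{N}$ vanishes and $\psi(0)=0$. Even when $\psi(0)\neq 0$, a real-valued continuous periodic $\psi$ may change sign, so non-vanishing at one point does not propagate; and your proposed propagation step leans on simplicity of $\rho$, which is not assumed (the hypothesis only requires $\rho$ to be semisimple, and in this paper's own application in Proposition 3.2 the dominant eigenvalue has multiplicity two). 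In short, ``bounded away from zero'' does not follow from the stated hypotheses and is not part of Dumas' theorem in this generality; it is an additional property that has to be verified in each application, e.g.\ by positivity of the relevant eigendata or by direct computation, which is effectively what the authors do downstream. The remainder of your proof stands, but this clause cannot be established by the route you outline.
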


\section{Lower Bounds}\label{sec:lower}

The absolute value of a $k$-regular sequence is not necessarily $k$-regular. To circumvent this and to establish a lower bound for the partial sums of $|\eta|$, we exploit the fact that the product of two $k$-regular sequences is also $k$-regular \cite[Thm.~2.5]{AS1992}. In particular, we consider multiplication by various $2$-regular sequences $f$ whose absolute value is bounded by one. Then, obtaining the asymptotics of $\sum_{m\leqslant x}f(m)\eta(m)$ allows us to find lower bounds on the asymptotics of $|\eta|$ since \[\Bigg|\sum_{m\leqslant x}f(m)\eta(m)\,\Bigg|\leqslant \sum_{m\leqslant x}\big|\eta(m)\big|.\]

As a first step toward these lower bounds, we considered $f=\eta$, so that $f(m)\eta(m)=\eta^2(m)$, which is $2$-regular. Using Dumas' method described in the previous section, we rediscovered the result of Zaks, Pikovsky, and Kurths \cite{ZPK1997} that $x^{-\log_4(\alpha)}\sum_{m\leqslant x}\eta^2(m)$ is eventually bounded between two positive constants, where $\alpha=(1+\sqrt{17})/4$ and $\log_2(\alpha)\approx 0.357018636$. Using these methods, we could also give \emph{explicit} formulas for certain values of $x$. If $f(m) = \eta(m)$, we obtain for $x=2^n-1$ 
\[ \sum_{m\leqslant 2^n-1} \eta^2(m) \, = \, \myfrac{85+19\sqrt{17}}{306} \left(\myfrac{1+\sqrt{17}}{4}\right)^n + \myfrac{85-19\sqrt{17}}{306} \left(\myfrac{1-\sqrt{17}}{4}\right)^n + \myfrac{4}{9}.\]
Analogously, for $f(m) = \eta(m+1)$, we can derive 
\[ \sum_{m\leqslant 2^n-1} \eta(m)\eta(m+1) \, = \, -\myfrac{51+5\sqrt{17}}{306} \left(\myfrac{1+\sqrt{17}}{4}\right)^n + \myfrac{-51+5\sqrt{17}}{306} \left(\myfrac{1-\sqrt{17}}{4}\right)^n.\]

We tested other functions $f$ as well. Figure \ref{fig:compare3} shows the partial sums of $f\eta$ for  three paradigmatic examples of $2$-automatic functions $f$ that satisfy $f\in\{-1,1\}$, in particular, for the paperfolding sequence \cite[p.~155]{ASbook}, the Rudin--Shapiro sequence \cite[p.~78]{ASbook}, and the Thue--Morse sequence. The inherent growth displayed in Figure \ref{fig:compare3} (right) suggests that one should more closely examine $f=t$, the Thue--Morse sequence.

\begin{figure}[ht]
\begin{center}
  \includegraphics[width=.32\linewidth]{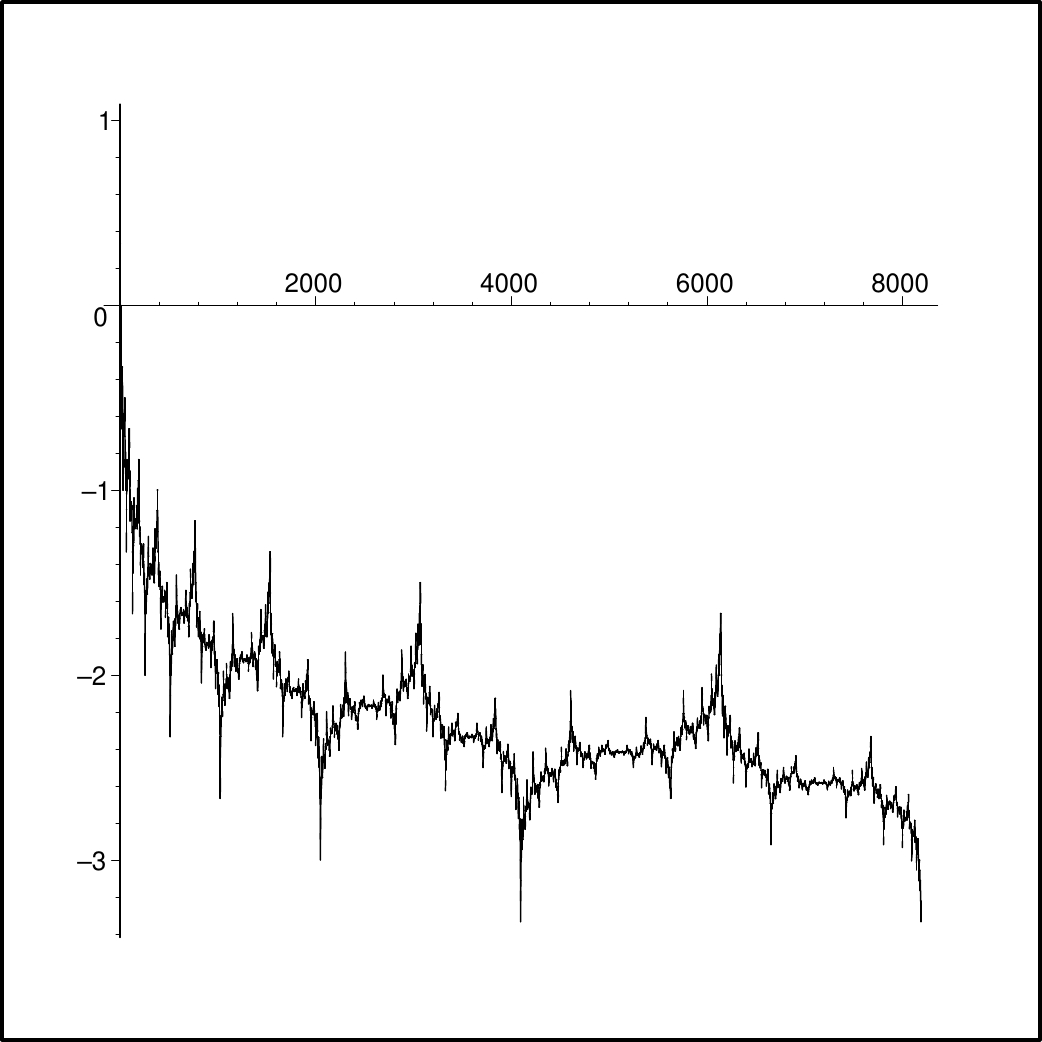}
  \includegraphics[width=.32\linewidth]{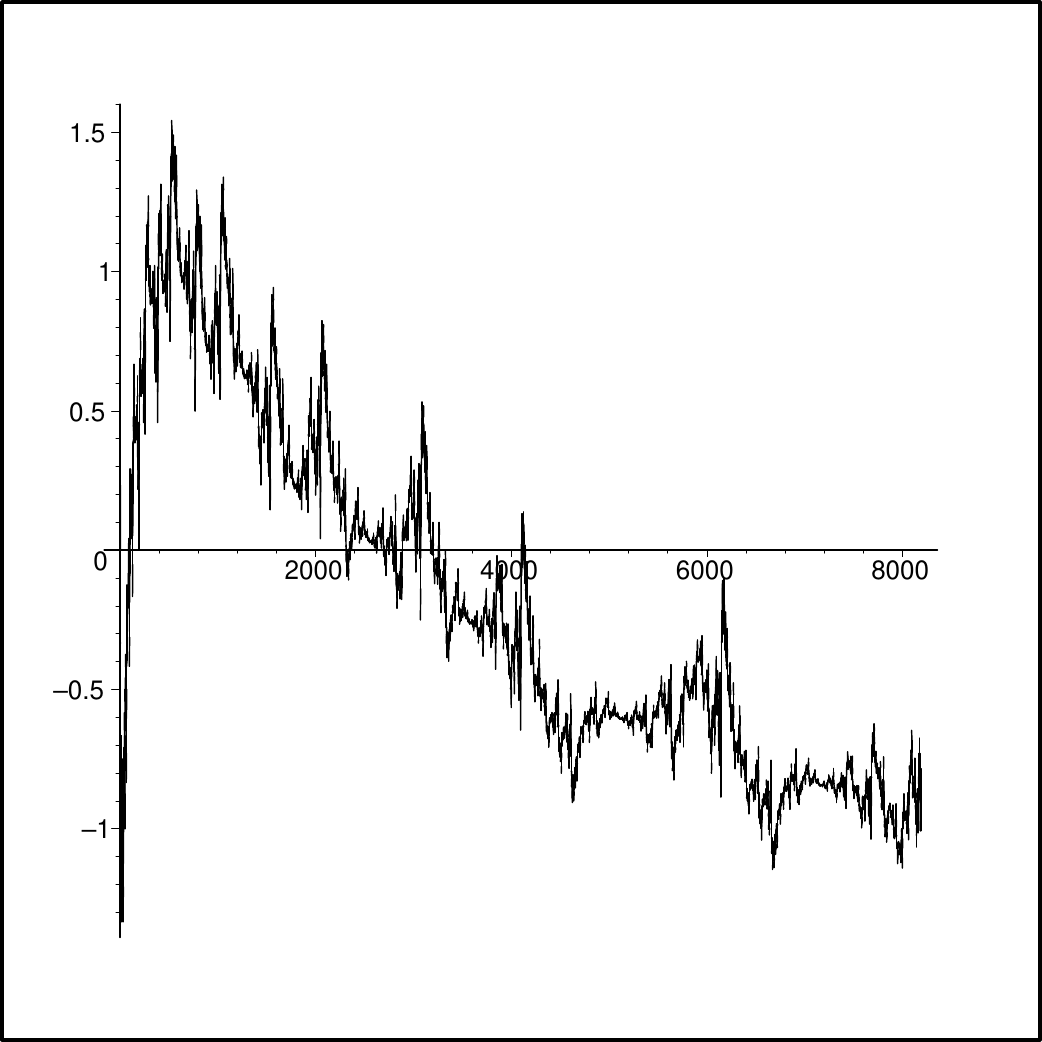}
  \includegraphics[width=.32\linewidth]{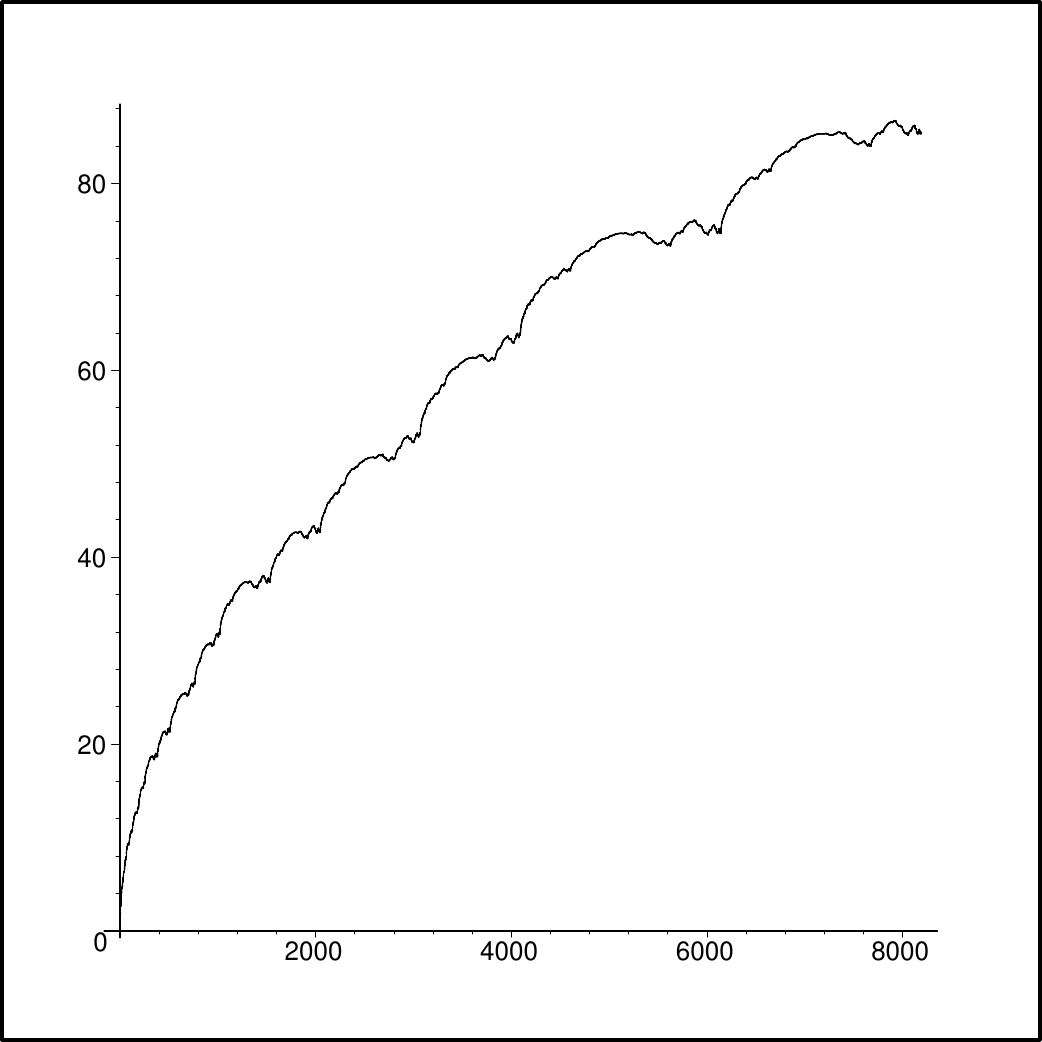}
\end{center}
\caption{The values of $\sum_{m\leqslant x}f(m)\eta(m)$ for $f$ the paperfolding sequence (left), the Rudin--Shapiro sequence (middle), and the Thue--Morse sequence (right), for values of $x=0,\ldots,2^{13}-1$.}
\label{fig:compare3}
\end{figure}

\begin{proposition}\label{prop:lowerbound} There exists a positive constant $c_1$ such that, for all $x$, \[c_1\sqrt{x}\leqslant \sum_{m\leqslant x}|\eta(m)|.\]
\end{proposition}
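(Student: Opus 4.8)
The plan is to exploit the hint suggested by the figure and take $f = t$, the Thue--Morse sequence itself, so that $f(m)\eta(m) = t(m)\eta(m)$. Since $t$ is $2$-automatic (hence $2$-regular) and $\eta$ is $2$-regular, the product $t\eta$ is again $2$-regular by \cite[Thm.~2.5]{AS1992}, so it admits a linear representation $L_2(t\eta) = (\boldsymbol{\ell}, {\bf A}_0, {\bf A}_1)$. Concretely, I would build this representation by hand: the $2$-kernel of $t\eta$ is spanned by finitely many sequences, which one can read off from the recursions. Writing $u(m) = t(m)\eta(m)$ and $v(m) = t(m)\eta(m+1)$ (and possibly $t(m)\eta(m-1)$-type correction terms needed to close the system), one uses $t(2m)=t(m)$, $t(2m+1)=-t(m)$ together with \eqref{eq:v2recs} to derive even/odd recursions for $u$ and $v$; for instance $u(2m) = t(m)\eta(m) = u(m)$, while $u(2m+1) = -t(m)\cdot\bigl(-\tfrac12(\eta(m)+\eta(m+1))\bigr) = \tfrac12(u(m)+v(m))$, and similarly for $v$. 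Collecting these into a vector-valued recursion yields the matrices ${\bf A}_0, {\bf A}_1$ explicitly (small, probably $3\times 3$ or $4\times 4$).

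Next I would verify that ${\bf A} = {\bf A}_0 + {\bf A}_1$ satisfies the hypotheses of Dumas' Proposition~\ref{prop:Dumas}: that its spectral radius $\rho$ is the unique dominant eigenvalue with equal algebraic and geometric multiplicity, and that $\rho > \rho^*({\bf A}_0,{\bf A}_1)$. The key arithmetic claim is that $\rho = \sqrt{2}$, so that $\log_2(\rho) = 1/2$. One way to see the eigenvalue: the explicit closed forms quoted in the text for $\sum_{m\le 2^n-1}\eta^2(m)$ and $\sum_{m\le 2^n-1}\eta(m)\eta(m+1)$ involve $(1\pm\sqrt{17})/4$, but for the $t$-twisted sums the corresponding characteristic polynomial should instead produce the eigenvalue $\sqrt 2$ (the figure's visual growth rate is consistent with $x^{1/2}$). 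Given the hypotheses, Proposition~\ref{prop:Dumas} gives a Hölder-continuous, $1$-periodic function $\psi$ bounded away from zero with
\[
\sum_{m\leqslant x} t(m)\eta(m) \, = \, \psi\bigl(\log_2 x\bigr)\, x^{1/2} + o\bigl(x^{1/2}\bigr).
\]
In particular, since $\psi$ is bounded below by some $\psi_{\min} > 0$, for all sufficiently large $x$ we have $\bigl|\sum_{m\le x} t(m)\eta(m)\bigr| \geqslant \tfrac12\psi_{\min}\sqrt x$. Combining with the trivial inequality $\bigl|\sum_{m\le x} t(m)\eta(m)\bigr| \leqslant \sum_{m\le x}|t(m)\eta(m)| = \sum_{m\le x}|\eta(m)|$ (using $|t(m)|=1$) gives $\sum_{m\le x}|\eta(m)| \geqslant \tfrac12\psi_{\min}\sqrt x$ for large $x$; adjusting the constant to handle the finitely many small $x$ (where the sum is positive since $|\eta(0)|=1$) yields the claimed $c_1\sqrt x$ for all $x$.

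The main obstacle is the verification step for Dumas' hypotheses — in particular pinning down that $\rho = \sqrt 2$ exactly and, more delicately, that $\rho({\bf A}) > \rho^*({\bf A}_0,{\bf A}_1)$. The spectral radius of ${\bf A}$ is a routine characteristic-polynomial computation once the matrices are in hand, and uniqueness/multiplicity of the dominant eigenvalue can be checked directly from its factorization. The joint spectral radius bound is the genuinely subtle point: in general $\rho^*$ is hard to compute, but here one only needs the one-sided estimate $\rho^* < \sqrt 2$, which can be obtained by exhibiting a norm (or a finite product bound, e.g.\ checking $\|{\bf A}_{i_1}{\bf A}_{i_2}\| \leqslant 2 - \varepsilon$ for all length-$2$ words in a suitable operator norm) certifying that all products of the ${\bf A}_i$ grow strictly slower than $2^{n/2}$. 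A secondary technical point is making sure the kernel system actually closes with the auxiliary sequences chosen; if a naive choice of $\{u,v\}$ does not close, one enlarges the spanning set (this only changes $d$, not the argument), and one must then recheck that introducing these extra coordinates does not introduce a spurious larger eigenvalue into ${\bf A}$ — but since the true growth of the partial sums is what it is, the dominant eigenvalue is forced regardless of the representation chosen.
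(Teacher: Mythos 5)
Your overall strategy is exactly the paper's: twist $\eta$ by the Thue--Morse sequence $t$, build the linear representation of the $2$-regular sequence $t\eta$ from the two kernel generators $u(m)=t(m)\eta(m)$ and $v(m)=t(m)\eta(m+1)$ (the system does close with just these two, giving $2\times 2$ matrices), and feed it to Dumas' theorem to get growth $x^{1/2}$. Your recursion $u(2m+1)=\tfrac12(u(m)+v(m))$ is also correct.

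However, there is a concrete step in your verification plan that fails. With the representation you describe, one finds
\[
{\bf A}_0=\begin{pmatrix}1&-\tfrac12\\ 0&-\tfrac12\end{pmatrix},\qquad
{\bf A}_1=\begin{pmatrix}\tfrac12&0\\ \tfrac12&-1\end{pmatrix},\qquad
{\bf A}={\bf A}_0+{\bf A}_1=\begin{pmatrix}\tfrac32&-\tfrac12\\ \tfrac12&-\tfrac32\end{pmatrix},
\]
which is traceless with determinant $-2$, so its eigenvalues are $+\sqrt2$ and $-\sqrt2$. Thus $\rho({\bf A})=\sqrt2$ is \emph{not} the unique dominant eigenvalue: there are two eigenvalues of maximal modulus, and the hypothesis of Proposition~\ref{prop:Dumas} is violated for the base-$2$ representation. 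Your claim that ``uniqueness/multiplicity of the dominant eigenvalue can be checked directly from its factorization'' is where the argument breaks; the check comes out negative. The paper's fix is to reinterpret $t\eta$ as a \emph{$4$-regular} sequence with matrices ${\bf B}_0={\bf A}_0^2$, ${\bf B}_1={\bf A}_0{\bf A}_1$, ${\bf B}_2={\bf A}_1{\bf A}_0$, ${\bf B}_3={\bf A}_1^2$. Then ${\bf B}={\bf A}^2=2{\bf I}$ has the unique dominant eigenvalue $2$ with equal algebraic and geometric multiplicities, the joint spectral radius of the ${\bf B}_i$ is $1$ (each has maximum absolute column sum $1$, and $1$ is an eigenvalue of ${\bf B}_0$ and ${\bf B}_3$), and Dumas then yields growth $x^{\log_4 2}=x^{1/2}$. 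Your final heuristic that ``the dominant eigenvalue is forced regardless of the representation chosen'' cannot substitute for this: the issue is not the value of the spectral radius but whether the structural hypotheses of the theorem hold for the particular representation, and they do not in base $2$. With the base-$4$ modification, the rest of your argument (bounding $\bigl|\sum_{m\le x}t(m)\eta(m)\bigr|$ below and above by $\sum_{m\le x}|\eta(m)|$, then adjusting the constant for small $x$) goes through as in the paper.
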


\begin{proof} We consider the sequence $t(m)\eta(m)$ and show that the partial sums of this sequence are asymptotic to a multiple of $\sqrt{x}$. To see that the result follows from this, we note that \[c_1\sqrt{x}\leqslant  \Bigg|\sum_{m\leqslant x}t(m)\eta(m)\Bigg|\leqslant  \sum_{m\leqslant x}|\eta(m)|,\] since $|t(m)|=1$.

To prove this, we will apply Dumas' method \cite{D2013} to the linear representation of the $2$-regular sequence $t(m)\eta(m)$. We find the linear representation via the recursions \[t(2m)\eta(2m)=t(m)\eta(m),\] and \[t(2m+1)\eta(2m+1)=\frac{t(m)}{2}(\eta(m)+\eta(m+1))=\frac{1}{2}t(m)\eta(m)+\frac{1}{2}t(m)\eta(m+1).\] These recurrences, along with the recurrences for $\eta(m)$, imply the matrix recurrences \[\big(t(2m)\eta(2m)\ \ t(2m)\eta(2m+1)\big)=\big(t(m)\eta(m)\ \ t(m)\eta(m+1)\big)\begin{pmatrix}1&-\frac{1}{2}\\ 0 & -\frac{1}{2}\end{pmatrix},\] and \[\big(t(2m+1)\eta(2m+1)\ \ t(2m+1)\eta(2m+2)\big)=\big(t(m)\eta(m)\ \ t(m)\eta(m+1)\big)\begin{pmatrix}\frac{1}{2}&0\\ \frac{1}{2} & -1\end{pmatrix},\] which further imply that the $2$-kernel of $t(m)\eta(m)$ generates a two-dimensional real vector space generated by the two sequences $t(m)\eta(m)$ and $t(m)\eta(m+1)$ Thus, $t(m)\eta(m)$ has linear representation \[L_2(f)=\left(\boldsymbol{\ell}=\begin{pmatrix}1& -\frac{1}{3}\end{pmatrix},\ {\bf A}_0=\begin{pmatrix}1&-\frac{1}{2}\\ 0 & -\frac{1}{2}\end{pmatrix},\ {\bf A}_1=\begin{pmatrix}\frac{1}{2}&0\\ \frac{1}{2} & -1\end{pmatrix}\right).\]

Since ${\bf A}={\bf A}_0+{\bf A}_1$ is traceless, it does not have a unique dominant eigenvalue, so we cannot apply Dumas' result directly to the above linear representation. That said, viewed as a $4$-regular sequence, $f$ has the linear representation \[L_4(f)=\left(\boldsymbol{\ell}=\begin{pmatrix}1& -\frac{1}{3}\end{pmatrix},\ {\bf B}_0={\bf A}_0^2,\ {\bf B}_1={\bf A}_0{\bf A}_1,\ {\bf B}_2={\bf A}_1{\bf A}_0,\ {\bf B}_3={\bf A}_1^2 \right).\] Moreover, we have that $\rho({\bf B})=\rho({\bf A}^2)=2$ is the unique dominant eigenvalue of ${\bf B}={\bf A}^2=2{\bf I}$ and has geometric and algebraic multiplicity equal to $2$. Furthermore, we have $\rho^*({\bf B}_0,{\bf B}_1,{\bf B}_2,{\bf B}_3)=1$ since each of the matrices has maximum column sum norm equal to $1$, and $1$ is an eigenvalue of ${\bf B}_0$ and ${\bf B}_3$. 

Thus Dumas' result, Proposition~\ref{prop:Dumas}, applied to $f=t\cdot\eta$ as a $4$-regular sequence, gives that \[x^{-\log_4(2)}\Bigg|\sum_{m\leqslant x}t(m)\eta(m)\Bigg|=x^{-1/2}\Bigg|\sum_{m\leqslant x}t(m)\eta(m)\Bigg|\] is eventually bounded between two positive numbers. This proves the result.
\end{proof}

\begin{figure}[ht]
\begin{center}
  \includegraphics[width=.9\linewidth]{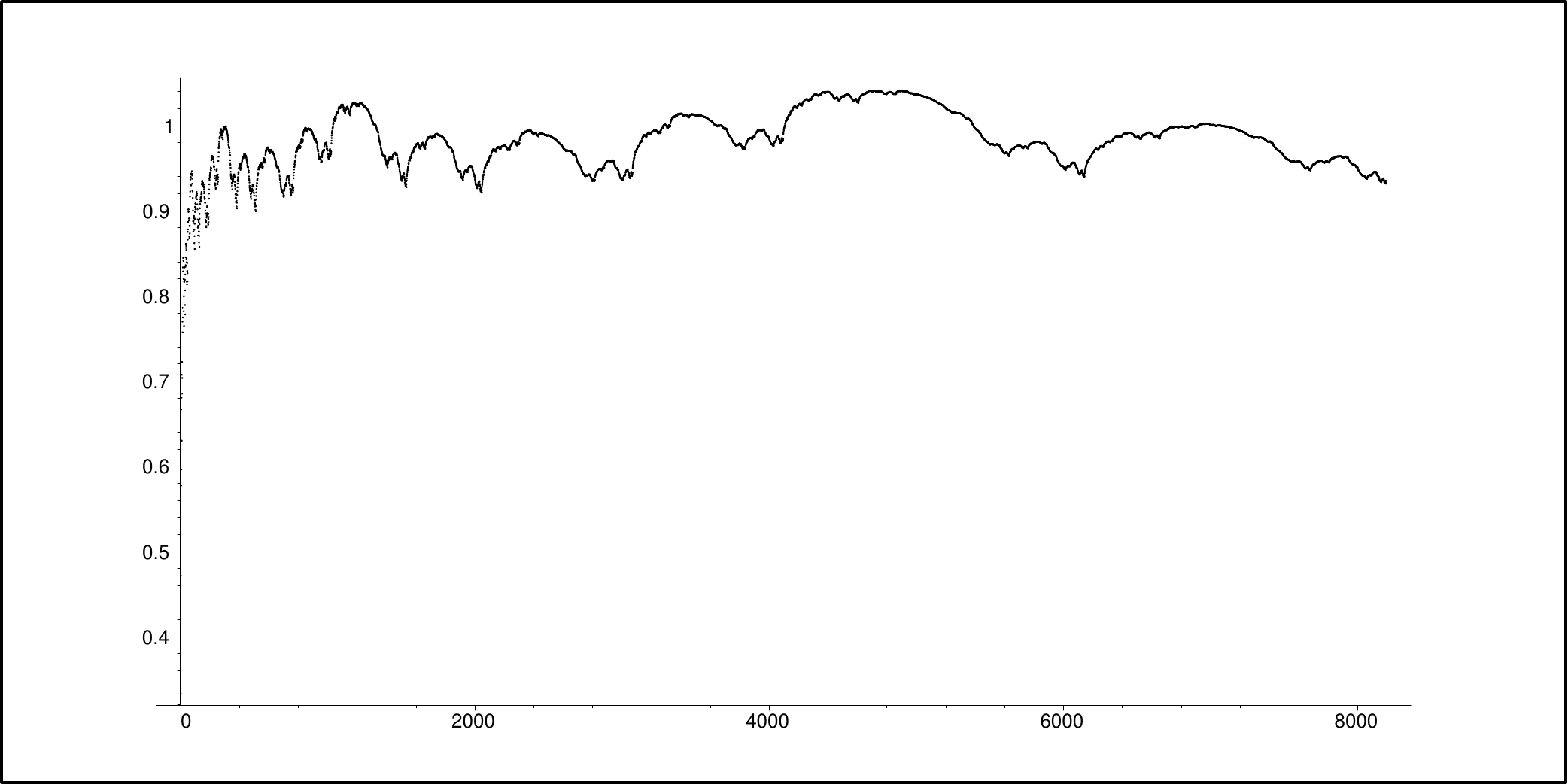}
\end{center}
\caption{The values of $x^{-1/2}\sum_{m\leqslant x}t(m)\eta(m)$ for  $x=0,\ldots,2^{13}$.}
\label{fig:TMetasum0to13overSqrt}
\end{figure}

Figure \ref{fig:TMetasum0to13overSqrt} contains a plot of the function $x^{-1/2}\sum_{m\leqslant x}t(m)\eta(m)$ for $x$ up to $2^{13}$. 

The form of the matrices ${\bf A}_0$ and ${\bf A}_1$ in the preceding proof suggests that the linear representation of a point-wise product of $k$-regular sequences is given by the Kronecker (tensor) product of their respective linear representations. It turns out this is precisely the case.

\begin{proposition}\label{prop:kronecker} Let $f$ and $g$ be $k$-regular sequences with linear representations \[L_k(f):=(\boldsymbol{\ell}, {\bf A}_0, {\bf A}_1,\ldots,{\bf A}_{k-1})\quad \mbox{and}\quad L_k(g):=(\boldsymbol{m}, {\bf B}_0, {\bf B}_1,\ldots,{\bf B}_{k-1}).\] Then \[L_k(fg):=(\boldsymbol{\ell}\otimes\boldsymbol{m}, {\bf A}_0\otimes{\bf B}_0, {\bf A}_1\otimes{\bf B}_1,\ldots,{\bf A}_{k-1}\otimes{\bf B}_{k-1}).\]
\end{proposition}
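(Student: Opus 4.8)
The plan is to verify directly that the proposed collection $(\boldsymbol{\ell}\otimes\boldsymbol{m}, {\bf A}_0\otimes{\bf B}_0,\ldots,{\bf A}_{k-1}\otimes{\bf B}_{k-1})$ computes the sequence $fg$ in the usual way, and then to observe that this is enough—since any collection $(\boldsymbol{v},{\bf C}_0,\ldots,{\bf C}_{k-1})$ for which $n\mapsto \boldsymbol{v}\,{\bf C}_{i_s}\cdots{\bf C}_{i_0}{\bf e}_1$ reproduces a given sequence is, by definition, a linear representation of that sequence (regularity of $fg$ itself being already known from \cite[Thm.~2.5]{AS1992}, so no finiteness argument is needed). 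Concretely, first I would fix $n$ with base-$k$ expansion $(n)_k = i_s i_{s-1}\cdots i_1 i_0$ and write $f(n) = \boldsymbol{\ell}\,{\bf A}_{i_s}\cdots{\bf A}_{i_0}{\bf e}_1$ and $g(n) = \boldsymbol{m}\,{\bf B}_{i_s}\cdots{\bf B}_{i_0}{\bf e}_1$. The product $f(n)g(n)$ is a product of two scalars, so it equals the Kronecker product of the two $1\times 1$ "matrices", i.e.\ $f(n)g(n) = (f(n))\otimes(g(n))$.

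The key step is then the mixed-product property of the Kronecker product: $({\bf X}_1{\bf X}_2)\otimes({\bf Y}_1{\bf Y}_2) = ({\bf X}_1\otimes{\bf Y}_1)({\bf X}_2\otimes{\bf Y}_2)$ whenever the dimensions are compatible. Applying this repeatedly to the factored forms of $f(n)$ and $g(n)$ gives
\[
f(n)g(n) = \bigl(\boldsymbol{\ell}\,{\bf A}_{i_s}\cdots{\bf A}_{i_0}{\bf e}_1\bigr)\otimes\bigl(\boldsymbol{m}\,{\bf B}_{i_s}\cdots{\bf B}_{i_0}{\bf e}_1\bigr) = (\boldsymbol{\ell}\otimes\boldsymbol{m})\,({\bf A}_{i_s}\otimes{\bf B}_{i_s})\cdots({\bf A}_{i_0}\otimes{\bf B}_{i_0})\,({\bf e}_1\otimes{\bf e}_1).
\]
Here I am using that the Kronecker product of the initial vectors behaves correctly: if $f$ has dimension $d$ and $g$ has dimension $e$, then $\boldsymbol{\ell}\otimes\boldsymbol{m}$ is a $1\times de$ row vector, each ${\bf A}_i\otimes{\bf B}_i$ is $de\times de$, and ${\bf e}_1\otimes{\bf e}_1 \in\RR^{de}$ is exactly the first standard basis vector ${\bf e}_1$ of $\RR^{de}$ (the $1$ sitting in position $1$ because both factors have their $1$ in position $1$, and the lexicographic indexing of $\{1,\ldots,d\}\times\{1,\ldots,e\}$ puts $(1,1)$ first). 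This identifies the right-hand side as precisely $\boldsymbol{v}\,{\bf C}_{i_s}\cdots{\bf C}_{i_0}{\bf e}_1$ for the proposed collection, so it does compute $fg$.

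I do not expect a serious obstacle here; the argument is essentially bookkeeping. The one point that warrants care—and the only place an error could creep in—is the consistent choice of indexing convention for the Kronecker product and the verification that ${\bf e}_1\otimes{\bf e}_1 = {\bf e}_1$ under that convention, since a different ordering of the tensor basis would permute the matrices by a fixed conjugation and, while still yielding a valid linear representation, would not match the stated one on the nose. Spelling out that the standard row-major ordering makes everything line up, together with one line invoking the mixed-product property and one line on the scalar identity $f(n)g(n)=(f(n))\otimes(g(n))$, completes the proof.
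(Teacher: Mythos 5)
Your proof is correct, but it takes a genuinely different route from the paper's. The paper assumes (without loss of generality) that $L_k(f)$ and $L_k(g)$ arise from bases $\{f_1,\ldots,f_r\}$ and $\{g_1,\ldots,g_s\}$ of the spans of the respective $k$-kernels, orders the products $f_ig_j$ lexicographically, and determines each row of the $b$-th matrix ${\bf C}_b$ by expanding the recurrence $f_i(kn+b)g_j(kn+b)$ as a double sum; matching coefficients against the entries of ${\bf A}_b\otimes{\bf B}_b$ then gives ${\bf C}_b={\bf A}_b\otimes{\bf B}_b$. You instead work directly with the evaluation formula $f(n)=\boldsymbol{\ell}\,{\bf A}_{i_s}\cdots{\bf A}_{i_0}{\bf e}_1$ and invoke the mixed-product property $({\bf X}_1{\bf X}_2)\otimes({\bf Y}_1{\bf Y}_2)=({\bf X}_1\otimes{\bf Y}_1)({\bf X}_2\otimes{\bf Y}_2)$, together with the scalar identity $f(n)g(n)=(f(n))\otimes(g(n))$ and the observation ${\bf e}_1\otimes{\bf e}_1={\bf e}_1$. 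Your argument is shorter and slightly more general: it applies verbatim to \emph{any} pair of linear representations, not just those derived from kernel bases, so it dispenses with the paper's ``without loss of generality'' step. What the paper's proof buys in exchange is an explicit identification of the underlying spanning set $\{f_ig_j\}$ for the $k$-kernel of $fg$ and of which row of ${\bf C}_b$ encodes which recurrence, which is the viewpoint the authors lean on when they subsequently build the sign-mimicking sequences $f^{(n)}$. Your point of care about the indexing convention is exactly the right one, and it is the same convention (row-major, $(1,1)$ first) that the paper's lexicographic ordering fixes.
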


\begin{proof} Without loss of generality, we assume that the linear representations $L_k(f)$ and $L_k(g)$ have been derived from bases $\{f=f_1,\ldots,f_r\}$ and $\{g=g_1,\ldots,g_s\}$ of the vector spaces spanned by the $k$-kernels of $f$ and $g$, respectively. We use the basis \begin{equation}\label{eq:fgbasis}\{fg=f_1g_1,\ldots,f_1g_s,f_2g_1,\ldots,f_2g_s,\ldots,f_rg_s\},\end{equation} of the $k$-kernel of $fg$, where this basis is ordered lexicographically in the indices $(i,j)$ of $f_ig_j$. It is clear that the initial vector then is $\boldsymbol{\ell}\otimes\boldsymbol{m}$. Now, for each $b\in\{0,1,\ldots,k-1\}$, to find the $b$-th matrix, ${\bf C}_b$, in the linear representation (related to the basis in \eqref{eq:fgbasis}) of $fg$, we need only determine each row of ${\bf C}_b$. The entries of the $(s(i-1)+j)$-th row of ${\bf C}_b$ for $i\in\{1,\ldots,r\}$ and $j\in\{1,\ldots,s\}$ are the coefficients of the lexicographically ordered sequences in \eqref{eq:fgbasis}. These are determined by the equation \begin{align*}f_i(kn+b)g_j(kn+b)&=\left(\sum_{\ell=1}^r ({\bf A}_b)_{(i,\ell)}f_\ell(n)\right)\left(\sum_{m=1}^s ({\bf B}_b)_{(j,m)}g_m(n)\right)\\
&=\sum_{\ell=1}^r\sum_{m=1}^s ({\bf A}_b)_{(i,\ell)}({\bf B}_b)_{(j,m)}f_\ell(n)g_m(n)\\
&=\sum_{\ell=1}^r\sum_{m=1}^s ({\bf A}_b\otimes{\bf B}_b)_{(s(i-1)+j,s(\ell-1)+m)}f_\ell(n)g_m(n).
\end{align*} Thus ${\bf C}_b={\bf A}_b\otimes{\bf B}_b$, which finishes the proof.
\end{proof}

Note that in Proposition \ref{prop:kronecker}, we are working with a pair of specific linear representations. This does not create a problem, since the relevant quantities are the trace, determinant, and spectral radius of the sum matrices, which are invariant. Also, note that for any two sequences $f$ and $g$, we have $fg(n)=gf(n)$, but the Kronecker product is generally not commutative, though the products are conjugate via a permutation matrix. Using the reverse product will also change the order of the basis, sending $f_ig_j$ to $g_if_j$.

In the rest of this section, we exploit this result to make an increasing sequence of $2^n$-regular sequences $f^{(n)}$, taking values in $\{-1,1\}$, that approximate the signs of $\eta$. We prove the lower bound of Theorem \ref{thm:main} by determining the growth of the partial sums of $f^{(n)}\eta$ for $n=25$. 

To this end, recall that \[L_2(\eta)=\left(\boldsymbol{\ell}=\begin{pmatrix}1& -\frac{1}{3}\end{pmatrix},\ {\bf A}_0=\begin{pmatrix}1&-\frac{1}{2}\\ 0 & -\frac{1}{2}\end{pmatrix},\ {\bf A}_1=\begin{pmatrix}-\frac{1}{2}&0\\ -\frac{1}{2} & 1\end{pmatrix}\right),\] so that, 
\begin{align*} 
L_4(\eta)&=\left(\boldsymbol{\ell},{\bf A}_0{\bf A}_0,{\bf A}_0{\bf A}_1,{\bf A}_1{\bf A}_0,{\bf A}_1{\bf A}_1\right)\\
L_8(\eta)&=\left(\boldsymbol{\ell},
{\bf A}_0{\bf A}_0{\bf A}_0,
{\bf A}_0{\bf A}_0{\bf A}_1,
{\bf A}_0{\bf A}_1{\bf A}_0,
{\bf A}_0{\bf A}_1{\bf A}_1,
{\bf A}_1{\bf A}_0{\bf A}_0,
{\bf A}_1{\bf A}_0{\bf A}_1,
{\bf A}_1{\bf A}_1{\bf A}_0,
{\bf A}_1{\bf A}_1{\bf A}_1,
\right)
\end{align*} and, in general, \begin{equation}\label{eq:linrep2n}L_{2^n}(\eta)=\left(\boldsymbol{\ell},{\bf B}_0,{\bf B}_1,\ldots,{\bf B}_{2^n-1}\right),\end{equation} with ${\bf B}_b={\bf A}_{i_s}\cdots{\bf A}_{i_1}{\bf A}_{i_0}$, where the binary expansion of $b$ is $(b)_2=i_s\cdots i_1i_0$.

\begin{definition} For each $n\geqslant 1$, define the sequence $f^{(n)}$ as the $2^n$-regular sequences with linear representation \[L_{2^n}(f^{(n)})=\Big(\boldsymbol{\ell}=(1\ \ -1),\ {\bf D}_0,\ {\bf D}_1,\ldots,\ {\bf D}_{2^n-1}\Big),\] where each ${\bf D}_b$ is defined in terms of the entries of ${\bf B}_b$. In particular, if \[{\bf B}_b=\begin{pmatrix} b_{11} & b_{12}\\ b_{21} & b_{22}\end{pmatrix},\quad \mbox{then}\quad {\bf D}_b=\begin{pmatrix} d_{11} & d_{12}\\ d_{21} & d_{22}\end{pmatrix},\] where for each $i=1,2$, denoting ${\rm sgn}(x)$ as the sign of the real number $x$, \[ ( d_{i1}\ \ d_{i2}) =\begin{cases} 
({\rm sgn}(b_{i1})\ \ 0), & \mbox{if $|b_{i1}|>|b_{i2}|$};\\
(0\ \ {\rm sgn}(b_{i2})), & \mbox{if $|b_{i1}|\leqslant|b_{i2}|$}.\end{cases}\]
\end{definition}

\begin{lemma} For each $n\geqslant 2$, the sequence $f^{(n)}$ takes only the values $1$ or $-1$.
\end{lemma}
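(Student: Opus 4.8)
The plan is to show that for each $n\geqslant 2$, every entry $f^{(n)}(m)$ is a product of the form $\boldsymbol{\ell}\,{\bf D}_{b_1}\cdots{\bf D}_{b_t}{\bf e}_1$ with $\boldsymbol{\ell}=(1,-1)$, and that such a product always lies in $\{-1,1\}$. The natural way to do this is to identify a finite set $S$ of column vectors in $\mathbb{R}^2$ that contains ${\bf e}_1=(1,0)^{\mathsf T}$, is closed under multiplication on the left by every matrix ${\bf D}_b$, and on which the linear functional $\boldsymbol{\ell}$ takes only the values $\pm 1$. The obvious candidate is $S=\{(\pm 1,0)^{\mathsf T},(0,\pm 1)^{\mathsf T}\}$, the set of signed standard basis vectors. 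First I would check that $\boldsymbol{\ell}\,{\bf v}\in\{-1,1\}$ for each ${\bf v}\in S$, which is immediate since $\boldsymbol{\ell}=(1,-1)$. Then it remains to verify the invariance: ${\bf D}_b\,{\bf v}\in S$ for every $b$ and every ${\bf v}\in S$.

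For the invariance step, observe that by construction each row of ${\bf D}_b$ is a signed standard basis (row) vector of $\mathbb{R}^2$: the defining case split puts $(\pm 1,0)$ or $(0,\pm 1)$ in each row, \emph{provided} that $b_{i1}$ and $b_{i2}$ are not both zero, so that ${\rm sgn}$ of the chosen entry is genuinely $\pm 1$ rather than $0$. Granting this, multiplying ${\bf D}_b$ by a signed standard basis column vector ${\bf v}$ simply picks out one signed entry from each row, yielding a column vector each of whose entries is in $\{-1,0,1\}$; and because the chosen row is a \emph{single} nonzero signed coordinate, the product has exactly one nonzero entry, equal to $\pm 1$. Hence ${\bf D}_b{\bf v}\in S$. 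Combining this with $f^{(n)}(m)=\boldsymbol{\ell}\,{\bf D}_{b_s}\cdots{\bf D}_{b_0}{\bf e}_1$ and ${\bf e}_1\in S$ gives the claim by induction on the word length.

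The one genuine obstacle is the hypothesis used above: that for $n\geqslant 2$, in each ${\bf B}_b$ no row is identically zero, equivalently that the case split in the definition never evaluates ${\rm sgn}(0)$. This is where $n\geqslant 2$ (as opposed to $n\geqslant 1$) enters, and it must be argued from the structure of the matrices ${\bf B}_b={\bf A}_{i_s}\cdots{\bf A}_{i_0}$ appearing in \eqref{eq:linrep2n}. I would handle this by tracking the possible rows of these products. Writing $R$ for the set of nonzero rows that can appear in any ${\bf B}_b$ with $b$ of binary length at least $2$, one checks directly from \[{\bf A}_0=\begin{pmatrix}1&-\tfrac12\\0&-\tfrac12\end{pmatrix},\qquad {\bf A}_1=\begin{pmatrix}-\tfrac12&0\\-\tfrac12&1\end{pmatrix}\] that a row of ${\bf A}_{i_s}\cdots{\bf A}_{i_0}$ is a (signed, scaled) convex-type combination of the rows of ${\bf A}_{i_{s-1}}\cdots{\bf A}_{i_0}$, and that starting from length $2$ the four products ${\bf A}_i{\bf A}_j$ already have all rows nonzero with the entry of strictly larger absolute value unambiguous; a short finite induction (or direct inspection of a small automaton on row-types) then shows every longer ${\bf B}_b$ inherits this. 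The length-$1$ matrices ${\bf A}_0,{\bf A}_1$ each have a row with a zero entry but also a strictly larger entry in absolute value, so in fact the argument would even go through for $n\geqslant 1$; but since the statement only asks for $n\geqslant 2$, it suffices to note that for $n\geqslant 2$ every ${\bf D}_b$ has each row a genuine $\pm 1$ signed basis vector, and invoke the invariance argument above. I expect the bookkeeping of which row-types arise to be the only place where care is needed; everything else is a one-line induction.
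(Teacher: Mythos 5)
Your invariance step fails, and it fails at the central point rather than at the point you flagged. The set $S=\{\pm{\bf e}_1,\pm{\bf e}_2\}$ is \emph{not} closed under left multiplication by the matrices ${\bf D}_b$. Each row of ${\bf D}_b$ is indeed a signed standard basis row vector (your worry about $\mathrm{sgn}(0)$ is a non-issue: ${\bf A}_0$ and ${\bf A}_1$ are invertible, so no ${\bf B}_b$ has a zero row), but nothing forces the two rows to select \emph{different} columns, so ${\bf D}_b$ need not be a signed permutation matrix. Concretely, for $n=2$ one has ${\bf B}={\bf A}_0{\bf A}_1=\bigl(\begin{smallmatrix}-1/4&-1/2\\ 1/4&-1/2\end{smallmatrix}\bigr)$, in which both rows have their larger entry in column $2$, giving ${\bf D}=\bigl(\begin{smallmatrix}0&-1\\ 0&-1\end{smallmatrix}\bigr)$; then ${\bf D}{\bf e}_1={\bf 0}$ and ${\bf D}{\bf e}_2=(-1,-1)^{\mathsf T}$, neither of which lies in $S$. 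Your justification --- ``because the chosen row is a single nonzero signed coordinate, the product has exactly one nonzero entry'' --- conflates rows with columns: $({\bf D}{\bf e}_j)_i=d_{ij}$ is nonzero exactly when row $i$ selects column $j$, and the number of such rows can be $0$ or $2$. So the induction collapses at word length one, and no amount of bookkeeping of row-types of the ${\bf B}_b$ repairs it, since the obstruction already occurs for ${\bf A}_0{\bf A}_1$ and ${\bf A}_1{\bf A}_0$.

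The fix is to put the signed-basis-vector structure on the correct side of the action, which is what the paper's proof does. The relevant recursion is $f^{(n)}_i(2^nm+b)=d_{i1}f^{(n)}_1(m)+d_{i2}f^{(n)}_2(m)$: since row $i$ of ${\bf D}_b$ has exactly one nonzero entry, equal to $\pm1$, each new kernel value equals $\pm$ a single previous kernel value, and induction from the initial values $f^{(n)}_1(0)=1$, $f^{(n)}_2(0)=-1$ gives the claim. Equivalently, the correct invariant set is $\{\pm1\}^2$ for the \emph{column vector} $\bigl(f^{(n)}_1(m),f^{(n)}_2(m)\bigr)^{\mathsf T}$ under $F(2^nm+b)={\bf D}_bF(m)$, not the set of signed basis vectors under products read off the representation formula. (This is a genuinely delicate point here: if you take the formula $\boldsymbol{\ell}\,{\bf D}_{i_s}\cdots{\bf D}_{i_0}{\bf e}_1$ completely literally with the row-vector convention used elsewhere in the paper, the example above would even give the value $0$ at $m=1$ for $n=2$, so identifying the intended row-wise recursion is part of the work, and your proposal does not do it.)
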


\begin{proof} The dimension of the matrices and form of the linear representation imply that the $2^n$-kernel is generated by at most $2$ sequences $f^{(n)}=f^{(n)}_1$ and $f^{(n)}_2$, where the definition of $L_{2^n}(f^{(n)})$ gives, for $(m)_{2^n}=i_si_{s-1}\cdots i_0$, that $f_i^{(n)}=\boldsymbol{\ell}\,{\bf D}_{i_s}{\bf D}_{i_{s-1}}\cdots {\bf D}_{i_0}{\bf e}_i$ for $i\in\{1,2\}$. Then, for each $b\in\{0,1,\ldots,2^n-1\}$, we have for any value $m\geqslant 0$ that \[f^{(n)}(2^nm+b)=\begin{cases}{\rm sgn}(b_{i1})\cdot f^{(n)}(m)  & \mbox{if $|b_{i1}|>|b_{i2}|$};\\
{\rm sgn}(b_{i2})\cdot f^{(n)}_2(m), & \mbox{if $|b_{i1}|\leqslant|b_{i2}|$}.\end{cases}\] The lemma follows since the initial values of $f^{(n)}$ and $f^{(n)}_2$ are $1$ and $-1$, respectively.
\end{proof}

\begin{proof}[Proof of lower bound in Theorem \ref{thm:main}] We prove this result via a computation. For each integer $n\in\{3,4,\ldots,25\}$, we form $L_{2^n}(f^{(n)}\eta)$ using Proposition \ref{prop:kronecker}. We then check that the $4\times 4$ matrix \[{\bf A}:=\sum_{b=0}^{2^n-1}{\bf D}_b\otimes {\bf B}_b\] has a unique simple dominant eigenvalue, and that $\rho({\bf A})>2$. Here, the joint spectral radius of the matrices ${\bf D}_b\otimes {\bf B}_b$ is bounded by $2$. To see this, note that  the matrices ${\bf B}_b$ each have absolute column sums bounded by $1$, and so, since ${\bf D}_b$ is a matrix with entries absolutely bounded by $1$, the absolute column sums of ${\bf D}_b\otimes {\bf B}_b$ can be at most twice that of ${\bf B}_b$. With the conditions of Proposition \ref{prop:Dumas} met, we can apply the result. Table \ref{table:lowers} contains the data from our computations that gives the result.
\end{proof}

\begin{table}[h]
{\footnotesize 
\begin{center}
\begin{tabular}{r|l|r|l}
$n$ & $\log_{2^n}(\rho({\bf A}))$ & $n$ & $\log_{2^n}(\rho({\bf A}))$	\\ \hline
3 & 0.565666799497928& 15& 0.623073449415594\\
4 & 0.596966648899461& 16& 0.623758591340177\\
5 & 0.604231518582344& 17& 0.624365750908173\\
6 & 0.608089435215993& 18& 0.624908958903671\\
7 & 0.610601912783904& 19& 0.625394632874860\\
8 & 0.613587830520361& 20& 0.625830845339636\\
9 & 0.615845308836204& 21& 0.626225163486055\\
10& 0.617611929153285& 22& 0.626583892017639\\
11& 0.619003323664352& 23& 0.626911602385536\\
12& 0.620255188737662& 24& 0.627211995904767\\
13& 0.621351234680364& 25& 0.627488248536345\\
14& 0.622287208010010& & \\
\end{tabular}
\end{center}
}
\vspace{0.1cm}
\caption{Values of $\log_{2^n}(\rho({\bf A}))$, where ${\bf A}:=\sum_{b=0}^{2^n-1}{\bf D}_b\otimes {\bf B}_b$, for $n=3,4,\ldots,25$.}
\label{table:lowers}
\end{table}

\begin{remark} The entries of the `sign' matrices ${\bf D}_b$ were chosen to mimic the total impact of the signs of the entries of ${\bf B}_b$. In this way, the functions $f^{(n)}$ mimic ${\rm sgn}(\eta)$, in a limiting sense. 

There are a few immediate ways that one can modify the definition of $f^{(n)}$ based on the comparison of the values $|b_{i1}|$ and $|b_{i2}|$. We tried each of these, and computed the associated values of $\log_{2^n}(\rho({\bf A}))$. Each of these was less than the tabulated values for $f^{(n)}$ as defined above. 

Additionally, one may want to know how often $f^{(n)}$ matches up with the signs of $\eta$; e.g., is there a growing initial segment (in $n$) where $f^{(n)}\eta$ is positive? We computed the initial values for $n\in\{1,\ldots,19\}$ and it turns out that, in this range, $f^{(n)}(5)=(-1)^n$. And there are other values with this property. So, if there is some initial growing agreement of these functions, then it would seem the function of $n$ describing the length of this segment is growing slowly.\exend
\end{remark}

\section{Upper Bounds}\label{sec:upper}

In this section, we prove the upper bound for Theorem \ref{thm:main} by exploiting generalizations of the recursions \eqref{eq:v2recs}. For example, the recursions \eqref{eq:v2recs} immediately imply that, when we consider the arithmetic progressions with common difference $4$, we have  
\begin{equation}\label{eq:v4recs}
\begin{split}
  \eta(4m) \, & = \, \eta(m) \\
  \eta(4m+1) \, & = -\myfrac{1}{4}\eta(m)+\myfrac{1}{4}\eta(m+1)\\
  \eta(4m+2) \, & = -\myfrac{1}{2}\eta(m)-\myfrac{1}{2}\eta(m+1) \\
  \eta(4m+3) \, & = \myfrac{1}{4}\eta(m)-\myfrac{1}{4}\eta(m+1) .
\end{split}  
\end{equation}
The absolute values of the coefficients on the righthand side of \eqref{eq:v4recs} add up to $3$. Baake and Coons \cite[Theorem 3.5]{BC2024} used this to show that the equality in \eqref{eq:limit} holds for any $\alpha>\log_4(3)$. Their proof immediately extends {\em mutatis mutandis} and proves the following result. 

\begin{lemma}\label{lem:upper} If ${\bf R}_n$ is the $2^n\times 2$ matrix such that \[\begin{pmatrix}\eta(2^n m)\\ \eta(2^n m+1)\\ \vdots\\ \eta(2^n m+2^n-1)\end{pmatrix}={\bf R}_n\begin{pmatrix}\eta(m)\\ \eta(m+1)\end{pmatrix},\] then Equation \eqref{eq:limit} holds for all $\alpha>\log_{2^n}\hspace{-0.1cm}\big(\|{\bf R}_n\|_{\rm abs}\big),$ where $\|{\bf R}_n\|_{\rm abs}$ denotes the sum of the absolute values of the entries of the matrix ${\bf R}_n$.
\end{lemma}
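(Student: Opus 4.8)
The plan is to mimic the proof of Baake and Coons \cite[Theorem 3.5]{BC2024} for the case $n=2$, which is built around the recursions \eqref{eq:v4recs}, and simply track where the constant $3 = \|{\bf R}_2\|_{\rm abs}$ appears. First I would set $S(x) := \sum_{m\leqslant x}|\eta(m)|$ and, given the splitting of $\{0,1,\ldots,2^n m + 2^n-1\}$ into the $2^n$ residue classes modulo $2^n$, use the matrix identity defining ${\bf R}_n$ together with the triangle inequality to bound $|\eta(2^n m + j)|$ by $\sum_{i=1}^{2}|({\bf R}_n)_{j,i}|\,|\eta(m+i-1)|$ for each $j\in\{0,\ldots,2^n-1\}$. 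Summing over $j$ and then over $m\leqslant x/2^n$ yields an inequality of the shape $S(2^n M) \leqslant \|{\bf R}_n\|_{\rm abs}\, S(M) + O(1)$, where the $O(1)$ absorbs the boundary terms coming from the shift by one in the argument of $\eta$ and from the finitely many initial values; here one uses that $|\eta|$ is bounded (indeed by $1$), as recorded in the introduction.

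Next I would iterate this sub-multiplicative-type recursion. Writing $M = 2^{n\ell}$ and unwinding gives $S(2^{n\ell}) \leqslant \|{\bf R}_n\|_{\rm abs}^{\ell}\,S(1) + O\big(\sum_{j=0}^{\ell-1}\|{\bf R}_n\|_{\rm abs}^{j}\big)$. Provided $\|{\bf R}_n\|_{\rm abs} > 1$ — which holds here, since already for $n=1$ the absolute entry sum is $1 + \tfrac12 + 1 + \tfrac12 = 3 > 1$, and the quantity is non-decreasing in $n$ under composition — the geometric sum is dominated by its last term, so $S(2^{n\ell}) = O\big(\|{\bf R}_n\|_{\rm abs}^{\ell}\big) = O\big((2^{n\ell})^{\log_{2^n}(\|{\bf R}_n\|_{\rm abs})}\big)$. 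For general $x$, choose $\ell$ with $2^{n(\ell-1)} \leqslant x < 2^{n\ell}$ and use monotonicity of $S$ to get $S(x) \leqslant S(2^{n\ell}) = O\big((2^{n\ell})^{\beta_n}\big) = O\big(x^{\beta_n}\big)$ with $\beta_n := \log_{2^n}(\|{\bf R}_n\|_{\rm abs})$. Hence $x^{-\alpha}S(x) \to 0$ for every $\alpha > \beta_n$, which is exactly \eqref{eq:limit}.

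It remains to justify that such a matrix ${\bf R}_n$ exists with the stated defining property; this is where I would invoke \eqref{eq:v2recs} directly. By an easy induction on $n$ — the base case $n=1$ being the pair of recursions \eqref{eq:v2recs} and the case $n=2$ being \eqref{eq:v4recs} — each of $\eta(2^n m),\ldots,\eta(2^n m + 2^n-1)$ is an explicit $\mathbb{R}$-linear combination of $\eta(m)$ and $\eta(m+1)$, the inductive step substituting the $n=1$ relations into the expressions $\eta(2^{n-1}(2m))$, $\eta(2^{n-1}(2m)+1)$, $\ldots$ and $\eta(2^{n-1}(2m+1))$, $\ldots$ and collecting terms; the coefficients are exactly the rows of ${\bf R}_{n-1}$ composed with the transfer matrices $\left(\begin{smallmatrix}1&0\\0&1\end{smallmatrix}\right)$-style blocks coming from \eqref{eq:v2recs}, so ${\bf R}_n$ is a product of the previously constructed matrices. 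This also shows $\|{\bf R}_n\|_{\rm abs} = \|{\bf R}_1^{\,\cdot}\|$-type products are well-defined and finite.

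The only genuinely delicate point — and the one where I would be most careful in writing out the details — is the bookkeeping of the boundary/error term $O(1)$ in the recursion $S(2^n M) \leqslant \|{\bf R}_n\|_{\rm abs}\, S(M) + O(1)$: because the right-hand side involves $\eta(m+1)$ as well as $\eta(m)$, the naive sum over $m\leqslant M$ of the bounds produces $S(M)$ plus one extra shifted copy, i.e.\ a term like $|\eta(M+1)|$ or a sum over a shifted range, and one must check this is absorbed into a bounded additive error using $\|\eta\|_\infty \leqslant 1$ rather than inflating the multiplicative constant. Everything else is the routine ``master-theorem'' style estimate, and since the argument is literally the Baake–Coons proof with $3$ replaced by $\|{\bf R}_n\|_{\rm abs}$ and $4$ replaced by $2^n$, I would simply say that their proof goes through \emph{mutatis mutandis}, as the lemma statement already anticipates.
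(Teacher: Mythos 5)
Your proposal is correct and is essentially the paper's own argument: the paper gives no independent proof but simply asserts that the Baake--Coons proof of the $n=2$ case extends \emph{mutatis mutandis}, and your write-up is precisely that extension (the recursion $S(2^nM)\leqslant \|{\bf R}_n\|_{\rm abs}\,S(M)+O(1)$, its iteration, and the boundary term from the shift $\eta(m+1)$ absorbed via the boundedness of $\eta$). The only blemish is the throwaway numerical claim that $\|{\bf R}_1\|_{\rm abs}=3$; in fact ${\bf R}_1$ has rows $(1\ \ 0)$ and $(-\tfrac12\ \ -\tfrac12)$, so $\|{\bf R}_1\|_{\rm abs}=2$, which still exceeds $1$ and so does not affect the argument.
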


\begin{proof}[Proof of upper bound in Theorem \ref{thm:main}] We use Lemma \ref{lem:upper} and note that to compute the necessary sum, we need only \textcolor{red}{to} compute the sum of the absolute values of the entries of the first rows of the matrices ${\bf B}_b$ from the linear representation of $\eta$ in \eqref{eq:linrep2n}. See the data we produced in Table \ref{table:values}. The value corresponding to $n=30$ provides the upper bound, and finishes the proof of the theorem.
\end{proof}

\begin{remark} Note that the proof of Theorem \ref{thm:main} implies that Equation \ref{eq:limit} holds for any value of $\alpha>0.6464616660609581$.\exend
\end{remark}

\begin{table}[h]
{\footnotesize 
\begin{center}
\begin{tabular}{r|l|l|r|l|l}
$n$	& $\|{\bf R}_n\|_{\rm abs}$& $\log_{2^n}\hspace{-0.1cm}\big(\|{\bf R}_n\|_{\rm abs}\big)$ &$n$	& $\|{\bf R}_n\|_{\rm abs}$ & $\log_{2^n}\hspace{-0.1cm}\big(\|{\bf R}_n\|_{\rm abs}\big)$	\\ \hline
$1$	& $2$	& $1.0$	& $16$& $1464.43762207031$	& $0.6572581892116728$\\
$2$	& $3$	& $0.792482$& $17$& $2272.89575195312$	& $0.6559009294152249$	\\
$3$	& $5$	& $0.773977$& $18$& $3527.49731445312$	& $0.6546905149519707$	\\
$4$	& $7.5$	& $0.726723$& $19$& $5474.64619445801$	& $0.6536078956052761$	\\
$5$	& $11.75$	& $0.710918$& $20$& $8496.52466583252$	& $0.6526328570448198$	\\
$6$	& $18$		& $0.694988$& $21$& $13186.5479431152$	& $0.6517513959927006$	\\
$7$	& $28.125$	& $0.687683$& $22$& $20465.4138507843$	& $0.6509500100339601$	\\
$8$	& $43.5$	& $0.6803679$& $23$& $31762.1571750641$	& $0.6502183164425649$	\\
$9$	& $67.578125$	& $0.675387157$& $24$& $49294.4703259468$	& $0.6495474249565217$	\\
$10$& $104.703125$	& $0.6710160692$& $25$& $76504.4521541595$	& $0.6489302434805050$  \\
$11$& $162.6171875$	& $0.66775781255$& $26$& $118734.071074605$	& $0.6483605558312379$  \\
$12$& $252.39453125$	& $0.664961403407$& $27$& $184274.078072488$	& $0.6478330968533019$\\ 
$13$& $391.8349609375$	& $0.66262325504703$& $28$& $285991.447942734$	& $0.6473433028892230$\\
$14$& $608.00634765625$	& $0.660567326816035$& $29$& $443855.720274746$	& $0.6468872849936371$\\ 
$15$& $943.6259765625$	& $0.65880475493734$& $30$& $688859.379193604$	& $0.6464616660609581$\\ 
\end{tabular}
\end{center}
}
\vspace{0.1cm}
\caption{Values of $\|{\bf R}_n\|_{\rm abs}$ and $\log_{2^n}\hspace{-0.1cm}\big(\|{\bf R}_n\|_{\rm abs}\big)$ for $n=1,2,\ldots,30$.}
\label{table:values}
\end{table}

\section{Further remarks}\label{sec:further}

The question of the optimal value, or even the existence of such an optimal value for which \eqref{eq:limit} holds remains open. In this paper, we've shown  that, if this value exists, it is in the interval $[0.6274882485,0.646461661]$. 

A possible attack on a better lower bound would be to study how the sign changes of $\eta$ interact with rotations; that is to prove a result concerning 
\[\sup_{\theta\in[0,1]}\,\biggl|\sum_{m\leqslant x}\mathrm{e}^{2\pi \mathrm{i} m \theta} \eta(m) \biggr|.\] Such a theorem is known for the Thue--Morse series; there is a $C>0$ such that \[\sup_{\theta\in[0,1]}\,\biggl|\sum_{k\leqslant x}\mathrm{e}^{2\pi \mathrm{i} k \theta} t(k) \biggr| \, \leqslant \, C x^{\log_4(3)};\] see Gel'fond \cite{G1968}, Newman \cite{N1969}, and Newman and Slater \cite{NS1975}. The exponent $\log_4(3)$, here, has very recently been shown to be optimal by two of us (Coons and Maz\'a\v{c}, in progress). Though we are optimistic, it remains unclear if there is a connection between the optimal $\theta$'s for the Thue--Morse sequence and its autocorrelations.

While we believe the optimal exponent $\alpha$ for which \eqref{eq:limit} holds is worthy of interest, the question of interpretation of this quantity is immediate. Such a limit statement is quite common in the dimensional analysis of spectral measures, and this quantity could certainly be considered as some type of dimension related to $\eta$ or the spectral measure of the Thue--Morse sequence. Indeed, the so-called {\em correlation dimension} of the spectral measure of the Thue--Morse sequence is \[D_2:=1- \lim_{n\to\infty}\frac{1}{n}\log_2\hspace{-0.1cm}\Bigg(\sum_{m\leqslant 2^n}\eta^2(m)\Bigg)=1-\log_2((1+\sqrt{17})/4)\approx0.64298136.\] This is precisely the result of Zaks, Pikovsky, and Kurths \cite{ZPK1997} alluded to in the introduction which gives a lower bound on $\alpha$. Other powers give other generalized dimensions (so-called R\'enyi dimensions \cite{Renyi}); e.g., the power $0$ gives $D_0$---sometimes called the {\em capacity}. However, the limit $D_1$, corresponding to the first power, is not the limit we consider. This is the {\em information dimension}, which is determined from a Dirichlet series; \[D_1=2+\frac{2}{\log(2)}\sum_{m\geqslant 1}\frac{\eta(m)}{m}\approx 0.50638399544731967430.\] See Baake, Gohlke, Kesseb\"ohmer, and Schindler \cite[Remark 9.4]{BGKS2019} for a relevant discussion.

\subsection*{Acknowledgements}

It is our pleasure to thank Michael Baake, Philipp Gohlke, Fabian Gundlach, and Neil Ma\~nibo for helpful comments and conversations. The work of MC was supported by a David W.~and Helen E.~F.~Lantis Endowment, the work of MC, APK, and AS was supported by the National Science Foundation under DMS-2244020, and the work of JM was supported by the German Research Council (Deutsche Forschungsgemeinschaft, DFG) under CRC 1283/2 (2021 - 317210226). JM~thanks CSU Chico for their hospitality during his research visit in February~2023.

\bibliographystyle{amsplain}

\begin{thebibliography}{99}

\bibitem{AS1992}
Allouche, J.-P., Shallit, J. (1992). {The ring of {$k$}-regular sequences} \emph{Theoret. Comput. Sci.} 98:163--197.

\bibitem{ASbook}
Allouche, J.-P., Shallit, J. (2003). \emph{Automatic Sequences}. Cambridge University Press.

\bibitem{BGKS2019}
Baake, M., Gohlke, P., Kesseb\"{o}hmer, M., Schindler, T. (2019). {Scaling properties of the {T}hue--{M}orse measure}. \emph{Disc. Contin. Dyn. Syst. A} 39:4157--4185. 

\bibitem{BC2024}
Baake, M., Coons, M. (2024). {Correlations of the Thue--Morse sequence}. \emph{Indag. Math.} (5) 35:914--930.

\bibitem{TAO}
Baake, M., Grimm, U. (2013). \emph{Aperiodic Order, Volume 1: A Mathematical Invitation}. Cambridge University Press, Cambridge. 

\bibitem{C1972}
Cobham, A. (1972). {Uniform tag sequences}. \emph{Math. Systems Theory} 6:164--192.

\bibitem{D2013}
Dumas, Ph. (2013). {Joint spectral radius, dilation equations, and asymptotic behavior of radix-rational sequences}. \emph{Lin. Alg. Appl.} (5) 438:2107--2126.  
    
\bibitem{G1968}
Gel'fond, A.~O. (1968). {Sur les nombres qui ont des propri\'et\'es additives et multiplicatives 
   donn\'ees}.  \emph{Acta Arith.} 13:259--265.

\bibitem{Mah27}
Mahler, K. (1927).
{The spectrum of an array and its application to the study
of the translation properties of a simple class of arithmetical functions.
II: On the translation properties of a simple class of arithmetical
functions}. \emph{J.\ Math.\ Phys.\ (MIT)} 6:158--163.

\bibitem{M21}
Morse, M. (1921). {Recurrent geodesics on a surface of negative curvature}. \emph{Trans. Amer. Math. Soc.} 22:84--100.

\bibitem{N1969}
Newman, D.~J. (1969). {On the number of binary digits in a multiple of three}. 
\emph{Proc.\ Amer.\ Math.\ Soc.} 21:719--721.

\bibitem{NS1975}  
Newman, D.~J., Slater, M. (1975). {Binary digit distribution over naturally defined sequences}. 
\emph{Trans.\ Amer.\ Math.\ Soc.} 213:71--78.

\bibitem{Renyi}
R\'enyi, A. (1960). {On measures of entropy and information}. In: Neyman, J., ed. \emph{Proc.~4th Berkeley Sympos. Math.~Statist.~and Prob.}, Vol.~I. University of California Press, Berkeley-Los Angeles, Calif.
	
\bibitem{Thue}
Thue, A. (1912). {\"Uber die gegenseitige Lage gleicher Zeichenreihen}. \emph{Norske vid. Selsk. Skr. Mat. Nat. Kl.} 1:1--67.

\bibitem{ZPK1997}
Zaks, M.~A., Pikovsky, A.~S., Kurths, J. (1997). {On the correlation dimension of the spectral measure for the Thue--Morse sequence}. \emph{J. Stat. Phys.} (5--6) 88:1387--1392.

\end{thebibliography}

\end{document}